\newcommand{\no}[1]{#1}
\renewcommand{\no}[1]{}
\renewcommand{\Delta}{\upDelta}}
\date{\today}
\newtheorem{lemma}{Lemma}[section]
\newtheorem{proposition}{Proposition}[section]
\newtheorem{theorem}{Theorem}[section]
\def\N{\mathbb{N}}
\def\R{\mathbb{R}}
\def\C{\mathbb{C}}
\def\Z{\mathbb{Z}}
\def\im{\mathfrak I}
\def\re{\mathfrak R}
\def\hat{\widehat}
\def\tilde{\widetilde}
\def\cH{{\mathcal H}}
\newcommand{\norm}[1]{\|#1\|}
\newcommand{\abs}[1]{\left|#1\right|}
\newcommand{\be}{\begin{equation}}
\newcommand{\ee}{\end{equation}}
\newcommand{\ba}{\begin{array}}
\newcommand{\ea}{\end{array}}
\newcommand{\bea}{\begin{eqnarray*}}
\newcommand{\eea}{\end{eqnarray*}}
\newcommand{\bean}{\begin{eqnarray}}
\newcommand{\eean}{\end{eqnarray}}
\def\cydot{\leavevmode\raise.4ex\hbox{.}}
\newcommand{\normmm}[1]{{\left\vert\kern-0.25ex\left\vert\kern-0.25ex\left\vert #1
    \right\vert\kern-0.25ex\right\vert\kern-0.25ex\right\vert}}
\newcommand\gobblepars{%
    \@ifnextchar\par%
        {\expandafter\gobblepars\@gobble}%
        {}}
\title[inverse  source problem]{Logarithmic stable recovery of the source and the initial state of time fractional diffusion equations}
\author{Yavar Kian}
\address{Y.Kian, Aix-Marseille Univ, Universit\'e de Toulon, CNRS, CPT, Marseille, France}
\email{yavar.kian@univ-amu.fr}
\author{\'Eric Soccorsi}
\address{\'E. Soccorsi, Aix-Marseille Univ, Universit\'e de Toulon, CNRS, CPT, Marseille, France}
\email{eric.soccorsi@univ-amu.fr}
\author{Faouzi Triki}
\address{F. Triki, Laboratoire Jean Kuntzmann, UMR CNRS 5224, Universit\'e Grenoble-Alpes, 700 Avenue  Centrale,
38401 Saint-Martin-d'H\`eres, France}
\email{faouzi.triki@univ-grenoble-alpes.fr}
\thanks{This work is partially supported the Agence Nationale la Recherche (ANR) under grant ANR-17-CE40-0029 (projet MultiOnde).}
\begin{document}

\begin{abstract}
In this paper we study the inverse problem of identifying a source or an initial state in a time-fractional diffusion
equation from the knowledge of a single boundary measurement. We derive logarithmic stability estimates for both inversions.  
These results show that the ill-posedness increases exponentially when the fractional derivative order tends to zero,
while it exponentially decreases when the regularity of the source or the initial state becomes larger.  The stability estimate concerning the problem of recovering the initial state can be considered as a weak observability inequality in control theory. 
The analysis is  mainly based  on Laplace inversion techniques and  a precise quantification of the  unique continuation property  for the  resolvent  of the time-fractional diffusion operator as a function of the frequency in the complex plane. We also determine a global time regularity for the  time-fractional diffusion equation  which is of interest itself. 
 
\end{abstract}

\maketitle

\section{Introduction and main results}

\subsection{Settings}
Let $\Omega \subset \R^d, d=2, 3,$ be a bounded domain containing the origin, with $C^2$ boundary $\partial \Omega$. 
With reference to \cite{P}, the
Riemann-Liouville integral operator of order  $\beta$, denoted by $I^\beta$, is defined by
$$
I^\beta h(t,\cdot):=\frac{1}{\Gamma(\beta)}\int_0^t\frac{h(\tau,\cdot)}{(t-\tau)^{1-\beta}}\ d\tau,
$$
and the Riemann-Liouville fractional derivative of order $\beta$ is $D_t^\beta:= \partial_t\circ I^{1-\beta}$. Set 
$$\partial_t^\beta h:=D_t^\beta (h-h(0,\cdot)),\ h\in  C([0,+\infty);L^2(\Omega)).$$
The operator $\partial_t^\beta$ is called the Caputo fractional derivative of order $\beta$, as we have
$$\partial_t^\beta h =I^{1-\beta}\partial_t h,\ h\in W^{1,1}_{loc}(\R_+;L^2(\Omega)), $$
where $\R_+:=(0,+\infty)$.

For $\alpha \in (0, 1)$ fixed, we consider the following initial boundary value problem (IBVP)
\bean 
\label{directproblem}
\left\{ \ba{lllccc}
\partial^\alpha_t u(t, x) -\Delta u(t, x) = g(t)f(x), & (t,x)\in   \R_+\times \Omega, \\
u(t, x) = u_0(x), & x\in  \Omega,
\\
u(t, x) = 0, & (t,x)\in  (0, +\infty) \times \partial \Omega,
\ea
\right.
\eean
where $f \in L^2(\Omega)$, $g \in 
L^\infty(\R_+)$ and $u_0 \in H^1_0(\Omega)$. In this article, assuming that the function $g$ is known and satisfies an appropriate condition that we will make precise further, we aim to study the stability issue in the inverse problem of determining either the source term $f$ or the initial state $u_0$, from a single boundary measurement $\partial_\nu u = \nabla u \cdot \nu $ on $\R_+ \times \partial \Omega$, of the solution $u$ to \eqref{directproblem}. Here and in the remaining part of this text, we denote by $\nu$ the outward unit normal vector to $\partial \Omega$.

The time-fractional diffusion system \eqref{directproblem} describes anomalous diffusion in homogeneous media. It has multiple engineering applications in geophysics, environmental science and biology, see e.g., \cite{AG,CSLG,JR}. From a mathematical viewpoint, the time-fractional diffusion equation of \eqref{directproblem} can be seen as a corresponding macroscopic model to microscopic diffusion processes governed by a continuous-time random walk, see e.g., \cite{BG,MK}. 

\subsection{A brief review of the existing literature}
\label{sec-review}

Inverse source problems have received a lot of attention from the mathematical community over the last decade, owing it to the  major impact they made in many areas, including medical diagnosis and industrial nondestructive testing. We refer the reader to \cite{Is, BLT} for an overview of inverse source problems for partial differential equations, and to \cite{JR,LLY} for the study of these problems in the framework of fractional diffusion equations. 

While several authors have already addressed the inverse problem of retrieving the space-varying part of the source term in a fractional diffusion equation, see e.g., \cite{FK,SY}, only uniqueness results are available in the mathematical literature, see e.g., \cite{JLLY,JR,KLY,KSXY,RZ} (see also \cite{KR1} for some related inverse problem) with the exception of the recent stability  result of \cite{CL} stated with specific norms. This is not surprising since the classical methods used to build a stability estimate for the source of parabolic ($\alpha=1$) or hyperbolic ($\alpha=2$) systems, see e.g., \cite{CY,IY,KSS, ACT}, do not apply in a straightforward way to time-fractional diffusion equations. 

As far as we know, the only mathematical work dealing with the stability issue of the inverse source problem under consideration in the present article, can be found in \cite{CL,JKZ,KY2}. While the authors of \cite{JKZ,KY2} studied this problem in the peculiar framework of a cylindrical domain $\Omega'\times(-\ell,\ell)$, where $\Omega'$ is an open subset of $\R^{d-1}$ and $\ell \in\R_+$, the Lipschitz stability estimate presented in \cite{CL} is derived for $\alpha \in (1,2)$ under a specifically designed topology induced by the adjoint system of the fractional wave equation. 

However, 
we could not find such thing as a stability inequality with respect to the usual norm in $L^2(\Omega)$, of the space-dependent part of the source term of time-fractional diffusion equations posed in a general bounded domain, in the mathematical literature. Besides, the main achievement of this article is the logarithmic-stable determination through one Neumann data, of either the space-varying part of the source term or the initial state of the IBVP \eqref{directproblem}. The corresponding stability estimates are given in Theorems \ref{thm-inv1} and \ref{thm-inv2} below, but prior to stating these two inverse results, we turn our attention to the direct problem associated with \eqref{directproblem}. 

\subsection{The direct problem}
\label{sec-fwd}
With reference to \cite{EK,KLY,KY1}, we define a weak solution to the IBVP as a function $u \in L_{loc}^1(\R_+;L^2(\Omega))$ satisfying the three following conditions simultaneously:
\begin{enumerate}[i)]
\item $D^\alpha_t [u-u_0](t, x) -\Delta u(t, x) = f(x)g(t)$ in the distributional sense in $\R_+\times\Omega$;
\item $I^{1-\alpha} u\in W_{loc}^{1,1}(\R_+;H^{-2}(\Omega))$ and $I^{1-\alpha} [u-u_0](0, x)=0$ for a.e. $x\in\Omega$;
\item $p_0:=\inf\{\tau>0:\ e^{-\tau t}u\in L^1(\R_+;L^2(\Omega))\}<\infty$ and
$$ \exists p_1\ge p_0,\ \forall p\in\C,\  \re p >p_1 \Longrightarrow \tilde{u}(p,\cdot):=\int_0^{+\infty} e^{-p t}u(t,\cdot)\ d t\in H^1_0(\Omega). $$
\end{enumerate}
Here and the remaining part of this article, $\re z$ (resp., $\im z$) denotes the real part (resp., the imaginary part) of the complex number $z$.
Notice from iii) that $\tilde{u}$ is the Laplace transform in time of $u$ with respect to the time-variable $t$.

As long as finite time evolution is concerned, we can rely on the results of \cite{KLY,KY1,SY}. Indeed, they ensure us for all $\alpha \in (0,1)$, all $T>0$ and all $(f,g,u_0)  \in L^2(\Omega) \times 
L^\infty(\R_+) \times \left(  H^1_0(\Omega)\cap H^2(\Omega) \right)$ that the IBVP \eqref{directproblem} admits a unique weak solution  
$u \in L^2_{\mathrm{loc}}(\R_+; H^2(\Omega)\cap H^1_0(\Omega))$ satisfying
$$ \norm{u}_{ L^2(0, T; H^2(\Omega))} +\norm{\partial_t^\alpha u}_{L^2((0,T)\times\Omega)} \leq 
 C\left( \norm{g}_{L^\infty(0, T)} \norm{f}_{L^2(\Omega)} + \norm{u_0}_{H^k(\Omega)} \right),$$
for some constant $C >0$ depending only on $\alpha$, $\Omega$ and $T$. However, since $C$ may blow up as $T$ tends to infinity, it is not clear how the global time properties of the solution on $\R_+$, needed by the analysis of the inverse problem under study in this article, can be inferred from the above estimate. For this reason we proceed by establishing the following global time existence and uniqueness result.
 
 \begin{proposition}
 \label{pr-fwd}
Let $\alpha \in (0, 1)$ and let $(f,g,u_0) \in L^2(\Omega) \times \left( L^1(\R_+)\cap L^\infty(\R_+) \right) \times \left( H^1_0(\Omega)\cap H^2(\Omega) \right)$. Then, there exists a unique weak solution $u \in L^r(\R_+; H^{\frac{7}{4}}(\Omega))$, $r>\alpha^{-1}$, to the IBVP \eqref{directproblem}. Moreover, we have 
 \bean \label{reg.direct}
 \|u\|_{L^r(\R_+; H^{\frac{7}{4}}(\Omega))} \leq 
 C\left(\|g\|_{L^r(\R_+)}\|f\|_{L^2(\Omega)} + \|u_0\|_{H^2(\Omega)} \right),
 \eean
for some positive constant $C$ depending only on $\alpha$ and $\Omega$.
 \end{proposition}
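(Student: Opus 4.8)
The plan is to solve the IBVP \eqref{directproblem} explicitly in the Laplace domain, establish the required regularity and decay estimates for the Laplace transform $\tilde{u}(p,\cdot)$ uniformly in the frequency variable $p$, and then invert the Laplace transform to recover the global time-regularity statement \eqref{reg.direct}. I split the problem by linearity into the source contribution (driven by $gf$ with $u_0=0$) and the initial-state contribution (driven by $u_0$ with $f=0$), and treat each separately before combining.

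First I would apply the Laplace transform in $t$ to \eqref{directproblem}, using condition ii) in the definition of weak solution, which says $I^{1-\alpha}[u-u_0](0,\cdot)=0$. A routine computation with the Caputo derivative gives $\widetilde{\partial_t^\alpha u}(p,\cdot) = p^\alpha \tilde{u}(p,\cdot) - p^{\alpha-1} u_0$, so that $\tilde{u}(p,\cdot)$ solves the resolvent equation
\begin{equation}
\label{eq-resolvent}
\left(p^\alpha - \Delta\right)\tilde{u}(p,\cdot) = \tilde{g}(p) f + p^{\alpha - 1} u_0 \quad \text{in } \Omega,
\qquad \tilde{u}(p,\cdot)|_{\partial\Omega}=0.
\end{equation}
Here $-\Delta$ is the Dirichlet Laplacian on $\Omega$, a positive self-adjoint operator with discrete spectrum $0<\lambda_1\le\lambda_2\le\cdots$ and eigenbasis $(\varphi_n)$ of $L^2(\Omega)$. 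For $\re p>0$ the scalar $p^\alpha$ (with the principal branch, since $\alpha\in(0,1)$) lies in the open right half-plane away from the negative real axis, hence $p^\alpha+\lambda_n\neq 0$ for all $n$, so \eqref{eq-resolvent} is uniquely solvable and
\begin{equation}
\label{eq-spectral}
\tilde{u}(p,\cdot) = \sum_{n\ge 1} \frac{\tilde{g}(p)\langle f,\varphi_n\rangle + p^{\alpha-1}\langle u_0,\varphi_n\rangle}{p^\alpha + \lambda_n}\, \varphi_n.
\end{equation}

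Next I would extract the $H^{7/4}$-bound by interpolation between the resolvent estimates the operator $(p^\alpha-\Delta)^{-1}$ satisfies. The key elementary inequality is that, for $\re p>0$ and $\theta\in[0,1]$, one has $\lambda_n^{\theta}/|p^\alpha+\lambda_n| \lesssim |p|^{-\alpha(1-\theta)}$ uniformly in $n$, which follows from the sectoriality bound $|p^\alpha+\lambda_n|\gtrsim |p|^\alpha+\lambda_n$ (valid because $p^\alpha$ stays in a fixed sector strictly contained in the right half-plane) together with the Young-type splitting $\lambda_n^\theta \le (|p|^\alpha+\lambda_n)^\theta$. Taking $\theta=7/8$ so that $\|\cdot\|_{H^{7/4}}\simeq\|(-\Delta)^{7/8}\cdot\|_{L^2}$ controls the $H^{7/4}$-norm via Parseval, this yields from \eqref{eq-spectral}
\begin{equation}
\label{eq-freqbound}
\|\tilde{u}(p,\cdot)\|_{H^{7/4}(\Omega)} \lesssim |p|^{-\frac{\alpha}{8}}\,|\tilde{g}(p)|\,\|f\|_{L^2(\Omega)} + |p|^{-1-\frac{\alpha}{8}}|p|^{\alpha}\,\|u_0\|_{H^2(\Omega)},
\end{equation}
where for the $u_0$ term I used $\theta=7/8$ against the full $H^2$-regularity of $u_0$ to gain two extra derivatives, so the effective power of $|p|$ becomes integrable. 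The exponents are chosen precisely so that each summand is square-integrable in $\im p$ along the line $\re p = \gamma$, which is what the inversion step requires.

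The final and most delicate step is passing from the frequency bound \eqref{eq-freqbound} back to the time-variable estimate \eqref{reg.direct} in $L^r(\R_+;H^{7/4})$ with $r>\alpha^{-1}$. The natural tool is a Hausdorff--Young / Hardy-space inversion for the Laplace transform: one recovers $u$ from $\tilde{u}$ along a vertical contour $\re p=\gamma$, lets $\gamma\downarrow 0$, and estimates the resulting oscillatory integral. \textbf{I expect this inversion to be the main obstacle}, for two reasons. First, the decay $|p|^{-\alpha/8}$ in \eqref{eq-freqbound} is only borderline integrable, so one cannot simply use the $L^2$-type Plancherel theorem for the Laplace transform; instead the condition $r>\alpha^{-1}$ must be exploited, presumably through an $L^p$-$L^{p'}$ mapping property of the inverse Laplace transform adapted to the algebraic decay, or through a careful estimate of the Mittag-Leffler kernel $E_{\alpha,1}(-\lambda_n t^\alpha)$ that represents the time-domain solution operator. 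Second, one must justify that the contour can be pushed to the imaginary axis and that the sum over $n$ and the integral over $\im p$ can be interchanged, which requires the uniformity in $n$ of the estimates above together with a dominated-convergence argument. Once these analytic points are settled, uniqueness follows from the injectivity of the Laplace transform on the class of weak solutions singled out by condition iii), completing the proof.
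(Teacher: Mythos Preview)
Your frequency-domain route diverges from the paper's argument, and the inversion step you yourself flag as the ``main obstacle'' is a genuine gap rather than a technicality. The paper never inverts the Laplace transform to obtain \eqref{reg.direct}; it works entirely in the time domain through the Duhamel representation
\[
u(t)=S_0(t)u_0+\int_0^t S_1(t-s)f\,g(s)\,ds,
\]
with the Mittag--Leffler operators $S_0(t)h=\sum_k E_{\alpha,1}(-\lambda_k t^\alpha)\langle h,\phi_k\rangle_{L^2(\Omega)}\phi_k$ and $S_1(t)h=\sum_k t^{\alpha-1}E_{\alpha,\alpha}(-\lambda_k t^\alpha)\langle h,\phi_k\rangle_{L^2(\Omega)}\phi_k$. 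From the standard bound $|E_{\alpha,\beta}(\tau)|\le c(1+|\tau|)^{-1}$ and the second-order asymptotics of $E_{\alpha,\alpha}$ at $-\infty$, the paper obtains the pointwise-in-$t$ estimates
\[
\norm{S_0(t)u_0}_{H^{2}(\Omega)}\le\frac{C\norm{u_0}_{H^2(\Omega)}}{1+t^\alpha},\qquad
\norm{S_1(t)f}_{H^{7/4}(\Omega)}\le C\bigl(t^{\frac{\alpha}{8}-1}\mathds{1}_{(0,1)}(t)+t^{-\alpha-1}\mathds{1}_{[1,\infty)}(t)\bigr)\norm{f}_{L^2(\Omega)}.
\]
The first lies in $L^r(\R_+)$ exactly when $r>\alpha^{-1}$ (this is where the exponent threshold originates), the second lies in $L^1(\R_+)$, and Young's convolution inequality then bounds the Duhamel term by $\norm{S_1(\cdot)f}_{L^1(\R_+;H^{7/4})}\norm{g}_{L^r(\R_+)}$, which gives \eqref{reg.direct} directly.

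Your Hausdorff--Young plan cannot be made to close. It delivers $L^r$ control only for $r\ge2$, so when $\alpha>\tfrac12$ the whole interval $(\alpha^{-1},2)$ is missed. More seriously, for the source contribution your frequency bound $\norm{\tilde{u}(p,\cdot)}_{H^{7/4}}\le C|p|^{-\alpha/8}|\tilde{g}(p)|\norm{f}_{L^2}$ is never in $L^{r'}$ along the imaginary axis: $|\tilde{g}|$ is merely bounded (from $g\in L^1(\R_+)$) and $\alpha r'/8<1$ for every $r'\le2$, so the tail diverges. The product $\tilde g\cdot(\text{resolvent})$ simply does not carry enough decay on its own; the convolution structure has to be exploited in the time domain via Young's inequality. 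Your own fallback --- ``a careful estimate of the Mittag--Leffler kernel'' --- is precisely the paper's proof, and is what you should carry out instead of the contour inversion.
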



Having expressed Proposition \ref{pr-fwd}, whose proof can be found in Appendix \ref{sec-prfwd}, we turn now to stating the two main results of this article. Each of them is concerned with one of the inverse problems described in the introduction.  
\subsection{Inverse problems}

In this section we present two stability estimates. The first one is associated with the determination of the source term $f$ of the IBVP \eqref{directproblem}, by one boundary measurement of the solution over the entire time-span $\R_+$. The second one is related to the same inverse problem where the unknown is replaced by the initial state $u_0$. Prior to stating these two inequalities, we introduce the following notations. Firstly, for all $\tau\in\R$, we note $\lfloor \tau\rfloor$ the integer part of  $\tau$. Secondly, for all $k\in\mathbb N:=\{1,2,\ldots\}$, we denote by $H^k_0(\Omega)$ the closure of $C^\infty_0(\Omega)$ in the topology of the $k$-th order Sobolev space $H^k(\Omega)$. 

We start with the determination of the source term $f$.

 \begin{theorem}
 \label{thm-inv1}
 Let $\alpha \in (0,1)$, let $f \in H^k_0(\Omega)$ for some $k\in \N$ and let $g \in L^\infty(\R_+)\cap L^1(\R_+)$ satisfy the condition
 \begin{equation}
 \label{gcond}
 \exists c_0>0,\ \forall p \in \R_+,\ \abs{\tilde{g}(p)} \geq c_0.
 \end{equation} 
 Assume that $u_0=0$ and denote by $u$ the $L^{\frac{2}{\alpha}}(\R_+; H^{\frac{7}{4}}(\Omega))$-solution to \eqref{directproblem}, and let  $s= 1+ \lfloor \frac{2}{\alpha}\rfloor $. \\
 Then, for all $\theta \in (0,1)$, there exists $\varepsilon_0= \varepsilon_0\left(\Omega, d, k, \theta, \frac{\norm{f}_{H^k(\Omega)}}{\|f \|_{L^1(\Omega)}}\right)\in (0, 1)$ such that we have
\begin{equation}
\label{main1}
\norm{f}_{L^2(\Omega)}\leq 
C \norm{f}_{H^k(\Omega)} 
 \left\{ \begin{array}{cl}  \abs{\ln \norm{\partial_\nu u}_{L^{\frac{2}{\alpha}}(\R_+;L^2(\partial \Omega))}}^{-\frac{k}{1+\theta}} & \mbox{if}\ s \leq d +\frac{d^2}{2k}\\   \abs{\ln \norm{\partial_\nu u}_{L^{\frac{2}{\alpha}}(\R_+;L^2(\partial \Omega))}}^{-\frac{d^2}{2(s-d)(1+\theta)}}  & \mbox{if}\ s > d +\frac{d^2}{2k}, \end{array} \right. 
\end{equation}
whenever $\norm{\partial_\nu u}_{L^{\frac{2}{\alpha}}(\R_+;L^2(\partial \Omega))} \in (0, \varepsilon_0)$. Here, $C$ is a positive constant depending only on $\Omega$, $d$, $k$, $\theta$ and $c_0$.

\end{theorem}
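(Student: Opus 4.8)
\emph{Plan.} The strategy is to pass to the Laplace domain, convert the inverse source problem into a quantitative unique continuation problem for the resolvent $(-\Delta+z)^{-1}$ viewed as a holomorphic function of the complex frequency $z$, and close the argument by a frequency-splitting optimization trading the exponential instability of low frequencies against the polynomial smallness of high frequencies coming from $f\in H^k_0(\Omega)$. First I would apply the Laplace transform in time to \eqref{directproblem}. Since $u_0=0$ and $\widetilde{\partial_t^\alpha u}(p)=p^\alpha\tilde u(p)$, the transform $\tilde u(p,\cdot)\in H^1_0(\Omega)$ solves the elliptic resolvent equation
\begin{equation*}
(-\Delta+p^\alpha)\tilde u(p,\cdot)=\tilde g(p)\,f\quad\text{in }\Omega,\qquad \re p>p_1,
\end{equation*}
so that $\partial_\nu\tilde u(p,\cdot)=\tilde g(p)\,F(p^\alpha)$ on $\partial\Omega$, where $F(z):=\partial_\nu(-\Delta+z)^{-1}f$. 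Restricting to $p\in\R_+$, the lower bound \eqref{gcond} gives $\|F(p^\alpha)\|_{L^2(\partial\Omega)}\le c_0^{-1}\|\partial_\nu\tilde u(p,\cdot)\|_{L^2(\partial\Omega)}$, and a H\"older estimate $\|\partial_\nu\tilde u(p,\cdot)\|_{L^2(\partial\Omega)}\le\|e^{-p\,\cdot}\|_{L^{(2/\alpha)'}(\R_+)}\,\varepsilon$ with $\varepsilon:=\|\partial_\nu u\|_{L^{2/\alpha}(\R_+;L^2(\partial\Omega))}$ controls $F$ on the whole positive axis $z=p^\alpha\in\R_+$ by the (small) measurement, up to an explicit $p$-dependent weight. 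This reduces matters to estimating $\|f\|_{L^2(\Omega)}$ from the smallness of $F$ on $(0,\infty)$.

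Next I would exploit the meromorphic structure of $F$. Writing $(\lambda_n,\varphi_n)$ for the Dirichlet eigenpairs of $-\Delta$ and $f_n=\langle f,\varphi_n\rangle$, one has the $L^2(\partial\Omega)$-valued expansion $F(z)=\sum_n\frac{f_n}{\lambda_n+z}\partial_\nu\varphi_n$, a meromorphic function with simple poles at $z=-\lambda_n$ whose residues are $f_n\,\partial_\nu\varphi_n$. The quantified unique continuation for the resolvent consists in recovering the low-frequency coefficients $f_n$ from the values of $F$ on $\R_+$: a Hadamard three-circle / two-constants estimate, applied in a conformal region of the complex $z$-plane joining the data ray $(0,\infty)$ to the pole $-\lambda_n$, bounds the residue at $-\lambda_n$ by the supremum of $F$ over a real segment at the cost of a factor growing like $e^{C\sqrt{\lambda_n}}$ (the elliptic frequency scale); this is coupled with the quantitative Rellich--Hassell--Tao lower bound $\|\partial_\nu\varphi_n\|_{L^2(\partial\Omega)}\gtrsim\lambda_n^{1/2}$, which guarantees that the residue genuinely detects $f_n$. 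Here the a priori growth of $z\mapsto F(z)$ on the relevant region, needed to run the three-circle inequality, is furnished by the global-in-time bound of Proposition~\ref{pr-fwd}; its order is governed by the time-integrability exponent through $s=1+\lfloor 2/\alpha\rfloor$, which is also the mechanism responsible for the deterioration of the estimate as $\alpha\to0$.

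Finally I would split $f=P_Nf+(I-P_N)f$ at a spectral cutoff $\lambda_n\le N$. Summing the low-frequency bounds yields $\|P_Nf\|_{L^2(\Omega)}^2\lesssim e^{C\sqrt N}\varepsilon^2$, while $\|(I-P_N)f\|_{L^2(\Omega)}^2\lesssim N^{-k}\|f\|_{H^k(\Omega)}^2$; a competing high-frequency bound limited by the growth order $s$ contributes the rate $N^{-d^2/(s-d)}\|f\|_{H^k(\Omega)}^2$ through a Sobolev trace embedding in dimension $d$. Choosing $\sqrt N\sim|\ln\varepsilon|$ to balance the exponential against the polynomial terms, and taking the worse of the two polynomial exponents — which switches precisely at $\tfrac{d^2}{2(s-d)}=k$, i.e. $s=d+\tfrac{d^2}{2k}$ — produces $\|f\|_{L^2(\Omega)}\lesssim\|f\|_{H^k(\Omega)}\,|\ln\varepsilon|^{-\min\{k,\,d^2/(2(s-d))\}/(1+\theta)}$, the loss $\theta>0$ absorbing the lower-order logarithmic factors and fixing the threshold $\varepsilon_0$ so that the optimal $N$ exceeds $1$; this is exactly \eqref{main1}.

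The main obstacle I expect is the second step: producing a clean, quantitative analytic-continuation estimate that transfers smallness of the $L^2(\partial\Omega)$-valued holomorphic map $F$ from the real ray to its residues at $-\lambda_n$, with a constant no worse than $e^{C\sqrt{\lambda_n}}$ and depending only on the admissible data. This demands a careful choice of the conformal geometry around each pole relative to $(0,\infty)$ together with a sharp a priori growth bound on $F(z)$ extracted from Proposition~\ref{pr-fwd}; controlling how this growth order enters, in combination with the trace embedding, is what ultimately pins down the two regimes and the exponents in \eqref{main1}.
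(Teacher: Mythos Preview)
Your route diverges substantially from the paper's, and as written it has a genuine gap.

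The paper never touches the eigenfunction expansion or the residues of the resolvent. After writing $(-\Delta+\omega^2)U(\omega,\cdot)=\tilde g(\omega^{2/\alpha})f$ for $\omega\in\R_+$, it multiplies by the null solution $e^{\omega x\cdot\xi}$ (with $\xi\in\mathbb S^{d-1}$) and integrates by parts, obtaining directly
\[
\tilde g(\omega^{2/\alpha})\,\hat f(i\omega\xi)=(2\pi)^{-d/2}\int_{\partial\Omega}\partial_\nu U(\omega,x)\,e^{\omega x\cdot\xi}\,d\sigma(x).
\]
The object that is analytically continued is therefore $z\mapsto\hat f(iz\xi)$, which is \emph{entire} because $f$ is compactly supported. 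After inserting the weight $(z/(2+z))^{s}e^{-\kappa_\Omega z}$ to absorb the H\"older factor $\omega^{(\alpha-2)/\alpha}$ and the exponential growth, the two-constants theorem in the quadrant $\{\Re z>-1,\ \Im z<0\}$ propagates smallness from the ray $[-1,\infty)$ to the imaginary axis $z=-it$, giving a bound on $\hat f(t\xi)$ for real $t$. The dichotomy in \eqref{main1} then comes from integrating this bound in polar coordinates: the weight leaves a factor $t^{-s}$ near $t=0$, whose $d$-dimensional integrability over $\delta<t<1$ separates the cases $s<d$, $s=d$, $s>d$ and produces exactly $\delta^{-(s-d)}$; balancing the latter against $R^{d}$ and $R^{-k}$ is what gives the exponent $\tfrac{d^{2}}{2(s-d)}$ when $s>d+\tfrac{d^{2}}{2k}$.

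The gap in your plan is the analytic-continuation step for $F(z)=\partial_\nu(-\Delta+z)^{-1}f$. This map is meromorphic with simple poles at \emph{every} $-\lambda_m$, so any simply connected region joining $(0,\infty)$ to a neighbourhood of a fixed $-\lambda_n$ on which a two-constants or three-circle argument could run must thread between all the other poles; there is no clean way to do this with a harmonic-measure cost of only $e^{C\sqrt{\lambda_n}}$. In the $z$-variable the relevant distance is $\lambda_n$, not $\sqrt{\lambda_n}$, and passing to $\omega=\sqrt z$ merely moves the poles to $\pm i\sqrt{\lambda_m}$ without removing the obstruction of the neighbouring ones. Your explanation of the second exponent (``a Sobolev trace embedding in dimension $d$'') is also not a mechanism that yields $\tfrac{d^{2}}{2(s-d)}$; in the paper this number has nothing to do with traces or Weyl counting and comes solely from the local integrability of $t^{-s}$ at the Fourier origin. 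The paper's device of testing against $e^{\omega x\cdot\xi}$ sidesteps the pole problem entirely by replacing your meromorphic $F$ with the entire function $\hat f$, and this is the idea your sketch is missing.
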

 
The corresponding statement for the determination of the initial state $u_0$, is as follows.

\begin{theorem}
\label{thm-inv2}
 Let $\alpha \in (0,1)$ and let $u_0 \in H^k_0(\Omega)$, where $k\in \N$.
 Assume that $f=0$, denote by $u$ the $L^{\frac{2}{\alpha}}(\R_+; H^{\frac{7}{4}}(\Omega))$-solution to the IBVP \eqref{directproblem}, and  let  $s= 1+ \lfloor \frac{2}{\alpha}\rfloor $. \\
 Then, for all $\theta \in (0,1)$, there exists $\varepsilon_0= \varepsilon_0\left(\Omega, d, k, \theta, \frac{\norm{u_0}_{H^k(\Omega)}}{\norm{u_0}_{L^1(\Omega)}}\right)\in (0, 1)$ such that we have 
 \bean 
 \label{main2}
 \norm{u_0}_{L^2(\Omega)}\leq 
C \norm{u_0}_{H^k(\Omega)}  \left\{ \begin{array}{cl}  \abs{\ln \norm{\partial_\nu u}_{L^{\frac{2}{\alpha}}(\R_+;L^2(\partial \Omega))}}^{-\frac{k}{1+\theta}} & \mbox{if}\ s \leq d +\frac{d^2}{2k}\\   \abs{\ln \norm{\partial_\nu u}_{L^{\frac{2}{\alpha}}(\R_+;L^2(\partial \Omega))}}^{-\frac{d^2}{2(s-d)(1+\theta)}}  & \mbox{if}\ s > d +\frac{d^2}{2k},\end{array} \right. 
\eean
provided $\norm{\partial_\nu u}_{L^{\frac{2}{\alpha}}(\R_+;L^2(\partial \Omega))} \in (0, \varepsilon_0)$. Here, $C$ is a positive constant  depending only on $\Omega$, $d$, $k$ and $\theta$.
\end{theorem}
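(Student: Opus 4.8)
The plan is to mirror the scheme already used for Theorem \ref{thm-inv1}, the only structural change being the Laplace-domain representation of the data. Applying the Laplace transform in time to \eqref{directproblem} with $f=0$ turns the fractional equation into the family of elliptic resolvent problems
\[
(-\Delta + p^\alpha)\,\tilde u(p,\cdot) = p^{\alpha-1}u_0,\qquad \tilde u(p,\cdot)\in H^1_0(\Omega),
\]
so that, writing $-\Delta$ (with Dirichlet condition) as $\sum_n\lambda_n\langle\cdot,\varphi_n\rangle\varphi_n$, one gets $\partial_\nu\tilde u(p)=p^{\alpha-1}\sum_n\frac{\langle u_0,\varphi_n\rangle}{\lambda_n+p^\alpha}\partial_\nu\varphi_n$ on $\partial\Omega$. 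Here $p^{\alpha-1}$ plays the role that $\tilde g(p)$ plays in Theorem \ref{thm-inv1}; since $p^{\alpha-1}>0$ is explicit, no analogue of the lower bound \eqref{gcond} is needed, which is why the constant in \eqref{main2} is independent of $c_0$. A H\"older inequality in $t$ (with the conjugate exponent of $\tfrac{2}{\alpha}$) then bounds the frequency-domain data by the measurement, $\|\partial_\nu\tilde u(p)\|_{L^2(\partial\Omega)}\lesssim p^{-(2-\alpha)/2}\,\|\partial_\nu u\|_{L^{2/\alpha}(\R_+;L^2(\partial\Omega))}$ for each $p>0$, which is where the norm $L^{2/\alpha}$ enters; I would abbreviate $\delta:=\|\partial_\nu u\|_{L^{2/\alpha}(\R_+;L^2(\partial\Omega))}$.

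Second, I would split $u_0$ through the spectral projector $P_{\le\Lambda}$ onto the eigenspaces with $\lambda_n\le\Lambda$. The high-frequency part is handled by the a priori regularity alone: since $u_0\in H^k_0(\Omega)$ controls $\sum_n\lambda_n^k|\langle u_0,\varphi_n\rangle|^2$, one has $\|(I-P_{\le\Lambda})u_0\|_{L^2(\Omega)}\lesssim\Lambda^{-k/2}\|u_0\|_{H^k(\Omega)}$. Everything then reduces to estimating the low-frequency part $\|P_{\le\Lambda}u_0\|_{L^2(\Omega)}$ in terms of $\delta$, with explicit dependence on $\Lambda$.

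Third, and this is the core, I would establish the quantitative unique continuation for the resolvent announced in the abstract. The function $z\mapsto\partial_\nu(-\Delta+z)^{-1}u_0=\sum_n\frac{\langle u_0,\varphi_n\rangle}{\lambda_n+z}\partial_\nu\varphi_n$ is $L^2(\partial\Omega)$-valued, analytic off $(-\infty,-\lambda_1]$, and is controlled on the positive axis $z=p^\alpha$ by $\delta$. Its residues at $z=-\lambda_n$ recover $\partial_\nu P_{\le\Lambda}u_0$, so a quantitative analytic-continuation estimate (a three-lines / harmonic-measure argument transferring the smallness from the positive frequency axis towards the poles, the large a priori bound coming from the resolvent estimate $\|(-\Delta+z)^{-1}\|\lesssim 1/|\im z|$) bounds $\|\partial_\nu P_{\le\Lambda}u_0\|_{L^2(\partial\Omega)}$ by $\delta$ up to a factor exponential in a power of $\Lambda$. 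Coupling this with a Lebeau--Robbiano-type spectral inequality for the boundary observation of finite eigenfunction sums, $\|P_{\le\Lambda}u_0\|_{L^2(\Omega)}\le C e^{C\sqrt{\Lambda}}\,\|\partial_\nu P_{\le\Lambda}u_0\|_{L^2(\partial\Omega)}$, yields the low-frequency bound $\|P_{\le\Lambda}u_0\|_{L^2(\Omega)}\le C e^{C\Lambda^{(1+\theta)/2}}\delta$, the exponent $1+\theta$ absorbing the loss of the analytic-continuation step.

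Finally, combining the two contributions and optimizing the cutoff $\Lambda$ gives the result: balancing $e^{C\Lambda^{(1+\theta)/2}}\delta$ against $\Lambda^{-k/2}\|u_0\|_{H^k}$ forces $\Lambda^{(1+\theta)/2}\sim\ln(1/\delta)$, hence the logarithmic rate $|\ln\delta|^{-k/(1+\theta)}$; the threshold $\varepsilon_0$ and its dependence on $\|u_0\|_{H^k(\Omega)}/\|u_0\|_{L^1(\Omega)}$ simply record when this balance lands in the admissible range. The two cases in \eqref{main2}, separated by $s=1+\lfloor 2/\alpha\rfloor$ and $d+\tfrac{d^2}{2k}$, arise because the exponent actually available is $\min\bigl(k,\tfrac{d^2}{2(s-d)}\bigr)/(1+\theta)$ (the two expressions coinciding at the threshold), reflecting the interplay between the frequency window $z=p^\alpha$ that the Laplace data resolves, governed by $s$, and the Weyl count $\#\{\lambda_n\le\Lambda\}\sim\Lambda^{d/2}$, governed by $d$. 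I expect the main obstacle to be making the analytic-continuation step of the third paragraph quantitative and uniform in $\Lambda$, i.e.\ proving the precise complex-frequency unique-continuation estimate for the resolvent, since this is exactly where the exponential ill-posedness, and its blow-up as $\alpha\to0$ (that is $s\to\infty$), is generated and must be tracked explicitly.
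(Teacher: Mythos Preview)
Your route is genuinely different from the paper's, and as written it has real gaps. The paper never uses the spectral decomposition of $-\Delta$, nor any Lebeau--Robbiano inequality. Instead, after the Laplace/H\"older step (which you reproduce correctly), it tests the resolvent equation $(-\Delta+\omega^2)U=\omega^{2-2/\alpha}u_0$ against the complex exponentials $e^{\omega x\cdot\xi}$, $\xi\in\mathbb S^{d-1}$, obtaining directly a pointwise bound on the analytic continuation of the Fourier transform, namely $|\hat u_0(i\omega\xi)|\le C\omega^{-1}e^{\kappa_\Omega\omega}\delta$ for $\omega>0$. It then applies the two-constants theorem (Theorem~\ref{thmUC}) to the \emph{entire} function $z\mapsto(z/(2+z))^s e^{-\kappa_\Omega z}\hat u_0(iz\xi)$ on a quadrant, transferring the smallness from the positive real axis to the negative imaginary axis and producing a bound on $|\hat u_0(t\xi)|$ for real $t$. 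The $L^2$ norm of $u_0$ is recovered by Plancherel after splitting Fourier space into a small ball $B_\delta(0)$, an annulus $B_R(0)\setminus \overline{B_\delta(0)}$, and the exterior, and optimizing over $\delta$ and $R$. The whole proof of Theorem~\ref{thm-inv2} is then literally one line: the estimate obtained is \eqref{a1} with $u_0$ in place of $f$ and $c_0=1$, so the rest is copied from Section~\ref{sec-pr_inv1}.

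Two structural features of the statement do not emerge from your scheme. First, the dichotomy at $s=d+d^2/(2k)$ in the paper comes from the $t^{-s}$ singularity at the origin in the bound on $|\hat u_0(t\xi)|$: integrating $t^{-2s}$ over the $d$-dimensional ball $B_\delta(0)$ produces the factor $\delta^{-(s-d)}$ when $s>d$, and the subsequent balance of $\delta$ against $R$ generates the second exponent $\frac{d^2}{2(s-d)(1+\theta)}$. Your Weyl-count heuristic does not reproduce this mechanism, and in your spectral picture the parameter stays $\ge\lambda_1>0$, so no analogous low-frequency divergence appears. Second, the dependence of $\varepsilon_0$ on $\|u_0\|_{L^1}$ enters in the paper through the global a priori bound $M=1+(2\pi)^{-d/2}e^{2\kappa_\Omega}\|u_0\|_{L^1}$ required by the two-constants theorem; in your outline there is no natural slot for $\|u_0\|_{L^1}$. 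More seriously, your key third step---extracting $\partial_\nu P_{\le\Lambda}u_0$ by analytically continuing the resolvent ``toward the poles''---is where the argument is genuinely incomplete: the function $z\mapsto\partial_\nu(-\Delta+z)^{-1}u_0$ is meromorphic with an accumulating sequence of poles on $(-\infty,-\lambda_1]$, a harmonic-measure argument cannot be run across that ray, and it is not clear how to recover the finite residue sum with only the exponential loss you claim. The paper sidesteps this entirely by working with the entire function $\hat u_0$ rather than the meromorphic resolvent, which is what makes the analytic-continuation step elementary.
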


To our knowledge, Theorem \ref{thm-inv1} (resp., Theorem \ref{thm-inv2}) is the only existing mathematical result on the stable recovery of the space-varying part of the source term (resp., the initial state) of a fractional diffusion equation posed in a general spatial domain. As already mentioned in Section \ref{sec-review}, there are two comparable results available in \cite{JKZ,KY2} but they only apply to cylindrical shaped domains.\\

Notice that the stability of the recovery of either $f$ or $u_0$, expressed in \eqref{main1} and \eqref{main2}, respectively, degenerates exponentially fast as the fractional order $\alpha$ tends to zero. In contrast, this stability exponentially 
 improves as their Sobolev regularity order $k$ increases. \\
 
 The results of Theorem \ref{thm-inv2} can also be seen as a weak observability inequality  that may  lead to an approximate controllability for the time-fractional diffusion equation in control theory \cite{Zu, AT, ACT}.  This problem  is of importance in
 Photoacoustic  imaging in the sub-diffusion regime \cite{ABJW, SC, RT}.

\subsection{Outline}
This article is organized as follows. Section 2 contains the proof of the two stability estimates of  \eqref{main1}-\eqref{main2}. In the Appendix A, we examine the direct problem associated with the IBVP \eqref{directproblem} and give the proof of Proposition \ref{pr-fwd}. In Appendix B, we establish a unique continuation result, which is a cornerstone in the proof of
Theorems \ref{thm-inv1} and  \ref{thm-inv2}.

\section{Proof of Theorems \ref{thm-inv1} and \ref{thm-inv2}}
In this section we derive the two stability inequalities \eqref{main1} and \eqref{main2}. Our strategy is to study the IBVP \eqref{directproblem} in the so-called frequency domain, that is to say that we examine the elliptic system derived from \eqref{directproblem} upon applying the Laplace transform with respect to the time-variable $t$. This is made precise in the coming section, which is a preamble to the proof of Theorems \ref{thm-inv1} and \ref{thm-inv2}. 

\subsection{Preliminaries}
Let $u$ be the weak solution to \eqref{directproblem} given by Propisition \ref{pr-fwd}. In light of iii) in Section \ref{sec-fwd}, the Laplace transform in time $\tilde{u}(p,\cdot)$ of $u$ is well-defined provided the real part of $p \in \C$ is sufficiently large. As a matter of fact, it can be checked from 
\cite[Theorem 1.3]{K} or \cite[Theorem 4.1]{KLY} that $\tilde{u}(p,\cdot)$ is well-defined for all $p\in \C_+:= \{z\in \mathbb C,\ \re(z)>0 \}$ and solves the following boundary value problem (BVP)
\bean \label{BVP}
 \left\{ \ba{lllccc}
(-\Delta+p^\alpha) \tilde{u}(p,\cdot)=\tilde{g}(p) f+p^{\alpha-1}u_0 & \mbox{in}\ \Omega,\\
\tilde{u}(p,\cdot) = 0 & \mbox{on}\ \partial \Omega.
\ea
\right.
\eean
Notice that since $g\in L^\infty(\R_+)$, $\tilde{g}(p,\cdot)$ is well-defined for $p \in \C_+$ as well. 

Next, for all $z \in \C^* \setminus i \R_+:=\{\tau \in \C,\ -i\tau\notin[0,+\infty)\}$ and for all $q \in \R$, we set $z^q := e^{q \log z}$, where $\log$ denotes the complex logarithm function defined and holomorphic on $\C^* \setminus i \R_+$. Thus, for all $\omega \in \R_+$,
$U(\omega,\cdot):=\tilde{u}(\omega^{\frac{2}{\alpha}},\cdot)$ is a solution to
\bean 
\label{ellipticequation}
\left\{ \ba{lllccc} (-\Delta + \omega^2) U= \tilde{g}(\omega^{\frac{2}{\alpha}}) f+\omega^{2-\frac{2}{\alpha}}
u_0 & \mbox{in}\ \Omega,\\
U= 0 &\mbox{on}\ \partial \Omega.
\ea
\right.
\eean
Let us denote by $(\lambda_k)_{k\geq 1} \in \R_+^{\N}$ the eigenvalues of the self-adjoint operator $A$ in $L^2(\Omega)$, acting as $-\Delta$ on its domain $H^1_0(\Omega)\cap H^2(\Omega)$, which is positive and has a compact resolvent. Since $F:=\tilde{g}(\omega^{\frac{2}{\alpha}}) f+\omega^{2-\frac{2}{\alpha}} u_0 \in L^2(\Omega)$, the BVP \eqref{ellipticequation} admits a unique solution $U=(A-\omega^2)^{-1} F$ within the space
$H^1_0(\Omega)\cap H^2(\Omega)$, provided we have $\omega \in \C\setminus \{ \pm i\lambda_k,\ k\geq 1\}$. In particular, this entails that $U(\omega,\cdot) \in H^2(\Omega)$ for all $\omega \in \C_+$.

Further, recalling that the Fourier transform $\hat{\phi}$ of a function $\phi \in L^1(\Omega)$ reads
\bean
\label{def-Fourier}
\hat{\phi}(\xi) = \frac{1}{(2\pi)^{\frac{d}{2}}}\int_\Omega e^{-ix\cdot \xi} \phi(x) dx,\ \xi\in\R^d,
\eean
we pick $\xi \in \mathbb S^{d-1}$, multiplying the first equation of \eqref{ellipticequation} by $e^{\omega x\cdot \xi }$, 
where 
$\omega \in \R_+$ is fixed, and we integrate by parts over $\Omega$. We obtain that
\bean \label{laplcasource}
\tilde{g}(\omega^{\frac{1}{2\alpha}}) \hat{f}(i\omega \xi)+\omega^{2-\frac{2}{\alpha}} \hat{u_0}(i \omega \xi)
= (2\pi)^{-\frac{d}{2}}\int_{\partial \Omega}\partial_\nu U(\omega, x) e^{\omega x\cdot \xi } d\sigma(x),\ \xi \in \mathbb S^{d-1},\ \omega \in \R_+.
\eean 
The above identity is a stepping stone to the proof of Theorems \ref{thm-inv1} and \ref{thm-inv2}.

\subsection{Proof of Theorem \ref{thm-inv1}}
\label{sec-pr_inv1}
Since $u_0=0$ by assumption and $\abs{\tilde{g}(p)} \geq c_0>0$ for all $p \in \R_+$, according to \eqref{gcond}, it follows from
\eqref{laplcasource} that
\begin{equation}
\label{eq0}
\abs{\hat{f}(i\omega \xi)} \leq c_0^{-1} (2\pi)^{-\frac{d}{2}}  \abs{\partial \Omega}^{\frac{1}{2}} 
e^{\kappa_\Omega \omega } \norm{\partial_\nu U(\omega,\cdot)}_{L^2(\partial \Omega)},\ \xi \in \mathbb S^{d-1},\ \omega \in \R_+,
\end{equation}
where $\kappa_\Omega := \sup_{x\in \partial \Omega} \abs{x}$.\\
Next, we have $u\in L^{\frac{2}{\alpha}}(\R_+; H^{\frac{7}{4}}(\Omega))$ from Proposition \ref{pr-fwd}, and hence
$\partial_\nu u\in L^{\frac{2}{\alpha}}(\R_+; H^{\frac{1}{4}}(\partial\Omega))\subset L^{\frac{2}{\alpha}}(\R_+; L^2(\partial\Omega))$, and
$$\partial_\nu U(\omega, x)=\tilde{\partial_\nu u}(\omega^{\frac{2}{\alpha}},x),\ x\in\partial\Omega,\ \omega \in \R_+,$$
from \cite[Step 2 in the proof of Theorem 2.2]{KLLY}. Thus, applying H\"older's inequality, we get that
\begin{eqnarray}
\norm{\partial_\nu U(\omega,\cdot)}_{L^2(\partial \Omega)} &\leq & \int_0^{+\infty}e^{-\omega^{\frac{2}{\alpha}} t} \norm{\partial_\nu u(t,\cdot)}_{L^2(\partial \Omega)}dt \nonumber \\
&\leq & \left( \int_0^{+\infty} e^{-\frac{2}{2-\alpha} \omega^{\frac{2}{\alpha}} t}dt\right)^{\frac{2-\alpha}{2}} \norm{\partial_\nu u}_{L^{\frac{2}{\alpha}}(\R_+;L^2(\partial \Omega))} \nonumber \\
&\leq & \omega^{\frac{\alpha-2}{\alpha}} \norm{\partial_\nu u}_{L^{\frac{2}{\alpha}}(\R_+;L^2(\partial \Omega))}. \label{eq1}
\end{eqnarray} 
Now, putting \eqref{eq0} together with \eqref{eq1}, we find for all $\xi \in \mathbb S^{d-1}$ that 
$$
\abs{\hat{f}(i\omega \xi)} \leq  c_0^{-1} (2\pi)^{-\frac{d}{2}}  \abs{\partial \Omega}^{\frac{1}{2}} \omega^{\frac{\alpha-2}{\alpha}} e^{\kappa_\Omega \omega } \norm{\partial_\nu u}_{L^{\frac{2}{\alpha}}(\R_+;L^2(\partial \Omega))},\ \omega \in \R_+.
$$
Since $\omega^{\frac{\alpha-2}{\alpha}} \le \left( \frac{2+\omega}{\omega} \right)^s$ for all $\omega \in \R_+$, where $s:= 1+ \lfloor \frac{2}{\alpha}\rfloor$, we deduce from the above line that
\bean
\label{a1}
\left( \frac{\omega}{2+\omega}\right)^s e^{-\kappa_\Omega \omega } \abs{\hat{f}(i\omega \xi)} \leq  c_0^{-1} (2\pi)^{-\frac{d}{2}}  \abs{\partial \Omega}^{\frac{1}{2}}  \norm{\partial_\nu u}_{L^{\frac{2}{\alpha}}(\R_+;L^2(\partial \Omega))},\ \omega \in [0,+\infty).
\eean
Further, the right-hand-side on \eqref{a1} being independent of $\xi$, we substitute $-\xi$ for $\xi$ in \eqref{a1} and obtain that
$$
\left( \frac{-\omega}{2-\omega}\right)^s e^{\kappa_\Omega \omega} \abs{\hat{f}(i\omega \xi)} \leq  c_0^{-1} (2\pi)^{-\frac{d}{2}}  \abs{\partial \Omega}^{\frac{1}{2}}  \norm{\partial_\nu u}_{L^{\frac{2}{\alpha}}(\R_+;L^2(\partial \Omega))},\ \omega 
\in (-\infty,0].
$$
Then we use the fact that $e^{\kappa_\Omega \omega}\ge e^{-2 \kappa_\Omega} e^{-\kappa_\Omega \omega}$ 
and $(2-\omega)^{-1} \geq 3^{-1} (2+\omega)^{-1}$ whenever $\omega \in [-1,0)$, to get that
$
\left( \frac{-\omega}{2+\omega}\right)^s e^{-\kappa_\Omega \omega} \abs{\hat{f}(i\omega \xi)} \leq  3^s c_0^{-1} (2\pi)^{-\frac{d}{2}}  e^{2 \kappa_\Omega} \abs{\partial \Omega}^{\frac{1}{2}}  \norm{\partial_\nu u}_{L^{\frac{2}{\alpha}}(\R_+;L^2(\partial \Omega))}$ for all $\omega \in [-1,0)$.
From this and \eqref{a1} it then follows for all $\xi \in \mathbb S^{d-1}$ that
\begin{equation}
\label{a2}
\abs{\left( \frac{\omega}{2+\omega}\right)^s e^{-\kappa_\Omega \omega } \hat{f}(i\omega \xi)} \leq  \epsilon,\ \omega \in [-1,+\infty),
\end{equation}
where
\begin{equation}
\label{a3}
\epsilon:=3^s c_0^{-1} (2\pi)^{-\frac{d}{2}} e^{2 \kappa_\Omega} \abs{\partial \Omega}^{\frac{1}{2}} \norm{\partial_\nu u}_{L^{\frac{2}{\alpha}}(\R_+;L^2(\partial \Omega))}.
\end{equation}
Having seen this, we put $Q:=\{ z \in \C,\ \re z >-1\ \mbox{and}\ \im z<0 \}$, $F(z) :=   \left(\frac{z}{2+z}\right)^{s} e^{-\kappa_\Omega z} \hat{f}(iz\xi)$
for all $z \in \C$ such that $\re z \ge -1$, and we apply Theorem \ref{thmUC}. We obtain that
\begin{equation}
\label{a4}
\abs{F(z)} \leq M  m^{w(z)},\ z\in \overline{Q} \setminus \{ -1 \},
\end{equation}
where $M:=1+(2\pi)^{\frac{d}{2}} e^{2 \kappa_\Omega} \norm{f}_{L^1(\Omega)}$, $m:=\sup_{t\in[-1,+\infty)} \abs{F(t)}$
and $w(z):=\frac{2}{\pi} \left( \frac{\pi}{2}+ \arg(z+1) \right)$.
Here and below, $\arg z$, for all $z \in \C$ satisfying $\re z>0$, denotes the angle $\theta \in \left(-\frac{\pi}{2},\frac{\pi}{2} \right)$ such that $z=\abs{z}e^{i \theta}$. Further, since $m \le \epsilon$ from \eqref{a2}, and $m(z) \ge 0$ for all $z \in \overline{Q} \setminus \{ - 1 \}$, \eqref{a4} yields that
$$
\abs{\left(\frac{z}{2+z}\right)^{s} e^{-\kappa_\Omega z} \hat{f}(iz\xi)} \leq M \epsilon^{w(z)},\ z \in \overline{Q} \setminus \{ - 1 \}.
$$
In the particular case where $z=-it$, $t \in \R_+$, this leads to 
$\frac{t^s}{(4+t^2)^{\frac{s}{2}}} \abs{\hat{f}(t\xi)} \leq M \epsilon^{w(-it)}$ and
$w(-it) =  \frac{2}{\pi} \left(\frac{\pi}{2}+\arg(1-it)\right)=\frac{2}{\pi} \left(\frac{\pi}{2}-\arctan t \right)=\frac{2}{\pi} \arctan t^{-1}$.
As a consequence we have
\begin{equation}
\label{a5}
\abs{\hat{f}(t\xi)} \leq  M \frac{(4+t^2)^{\frac{s}{2}}}{t^s} \epsilon^{\frac{2}{\pi}\arctan t^{-1}},\ t \in \R_+,\ \xi \in \mathbb S^{d-1}.
\end{equation}
Therefore, for all $\delta \in (0,1)$ and all $R \in (1,+\infty)$, we get
\begin{equation}
\label{ff2}
\norm{\hat{f}}_{L^2(B_{R,\delta}(0))}
\leq  h_s(\delta,R), 
\end{equation}
through standard computations, where
\begin{equation}
\label{a6}
h_s(\delta,R):= 5^{\frac{s}{2}} C_d \epsilon^{\frac{2}{\pi} \arctan R^{-1}} k_s(\delta,R)\ \mbox{and}\ k_s(\delta,R):=\left\{ \begin{array}{ll} R^d & \mbox{if}\ s < d,\\
-\ln \delta +R^d& \mbox{if}\ s=d,\\
\delta^{-(s-d)} +R^d& \mbox{if}\ s > d,
\end{array}\right.
\end{equation}
and $C_d$ is a positive constant depending only on $d$.
Here and in the remaining part of this article, $B_\rho(0)$ (resp., $\overline{B_\rho(0)}$), $\rho \in \R_+$, 
denotes the open (resp., closed) 
ball of $\R^d$ centered at $0$ and with radius $\rho$, i.e., $B_\rho(0):=\{ x \in \R^d,\ \abs{x}<\rho \}$ (resp., $\overline{B_\rho(0)}:=\{ x \in \R^d,\ \abs{x} \leq\rho \}$), and $B_{R,\delta}(0):= B_R(0) \setminus \overline{B_\delta(0)}$. Notice that the upper-bound on $\norm{\hat{f}}_{L^2(B_{R,\delta}(0))}$ given by \eqref{a6} was obtained upon decomposing the annulus $B_{R,\delta}(0)$ into $B_{R,1}(0) \cup B_{1,\delta}(0)$ and
majorizing $\abs{\hat{f}(t\xi)}$ by $5^{\frac{s}{2}} M \epsilon^{\frac{2}{\pi} \arctan R^{-1}}$ for all $t \in B_{R,1}(0)$, and by
$5^{\frac{s}{2}} M t^{-s} \epsilon^{\frac{2}{\pi}\arctan R^{-1}}$ for all $t \in B_{1,\delta}(0)$.

Further, since $f\in L^1( \Omega)$, we have $\abs{\hat{f}(\xi)} \leq (2\pi)^{-\frac{d}{2}} \norm{f}_{L^1(\Omega)}$ for all $\xi \in \R^d$ and hence $\abs{\hat{f}(\xi)} \leq (2\pi)^{-\frac{d}{2}} \abs{ \Omega}^{\frac{1}{2}} \norm{f}_{H^k(\Omega)}$ by the Cauchy-Schwarz inequality. This entails that
\begin{equation}
\label{a7}
\norm{\hat{f}}_{L^2( B_\delta(0))} \leq C \delta^\frac{d}{2} \norm{f}_{H^k(\Omega)},
\end{equation}
where, here and below, $C$ denotes a generic positive constant depending only on $d$ and $\Omega$, which may change from line to line.
Next, since the function $f \in H^k_0(\Omega)$ extended by zero in $\R^d \setminus \Omega$, lies in $H^k(\R^d)$, we infer from the
Fourier-Plancherel theorem that
\begin{eqnarray*}
\norm{\hat{f}}_{L^2(\R^d \setminus B_R(0))}^2 & = & \int_{\R^d\setminus B_R(0)} \abs{\hat{f}(\xi)}^2 d \xi\\
&\leq & R^{-2k} \int_{\R^d \setminus B_R(0)} (1+\abs{\xi}^2)^k \abs{\hat{f}(\xi)}^2 d\xi\\
&\leq & R^{-2k} \int_{\R^d} (1+ \abs{\xi}^2)^k \abs{\hat{f}(\xi)}^2 d\xi\\
& \leq & R^{-2k} \norm{f}^2_{H^k(\Omega)}.
\end{eqnarray*}
From this and \eqref{a7} it then follows that $\norm{\hat{f}}_{L^2(\R^d \setminus B_{R,\delta}(0))} \leq 
C(\delta^ \frac{d}{2} +R^{-k}) \norm{f}_{H^k(\Omega)}$, which together with \eqref{ff2} yields
$\norm{\hat{f}}_{L^2(\R^d)} \leq h_s(\delta,R)+ C  (\delta^ \frac{d}{2} +R^{-k}) \norm{f}_{H^k(\Omega)}$. And since $\norm{f}_{L^2(\Omega)}  = \norm{\hat{f}}_{L^2(\R^d)}$ from the Fourier-Plancherel theorem, we have
\begin{equation}
\label{a7.1}
\norm{f}_{L^2(\Omega)} \leq h_s(\delta,R)+ C  (\delta^ \frac{d}{2} +R^{-k}) \norm{f}_{H^k(\Omega)}.
\end{equation}
The rest of the proof is to estimate $h_s(\delta,R)$ with the aid of \eqref{a6}. This leads us to examine the three cases $s<d$, $s=d$ and $s>d$ separately.\\
 
\noindent {\it First case: $s<d$}. 
Taking $\delta = R^{-\frac{2k}{d}}$ in \eqref{a7.1}, we find that
\begin{equation}
\label{a8}
\norm{\hat{f}}_{L^2(\R^d)}\leq 5^{\frac{s}{2}} C_d M R^d \epsilon^{\frac{2}{\pi} \arctan R^{-1}}+ C R^{-k} \norm{f}_{H^k(\Omega)}.
\end{equation}
Next, assuming without loss of generality that $\epsilon \in (0,1)$, we pick $\theta\in(0,1)$, choose
$R=(-\ln \epsilon)^{\frac{1}{1+\theta}}$ in \eqref{a8} and get  that
$$
\norm{f}_{L^2(\Omega)} \leq 5^{\frac{s}{2}} C_d M (-\ln \epsilon)^{\frac{d}{1+\theta}} e^{-\frac{2}{\pi}(-\ln \epsilon)  \arctan (-\ln \epsilon)^{-\frac{1}{1+\theta}}}+ C (-\ln \epsilon)^{-\frac{k}{1+\theta}} \norm{f}_{H^k(\Omega)}.
$$
Since
$\arctan u =\int_0^u \frac{1}{1+v^2}  dv$ and since the function $v \mapsto\frac{1}{1+v^2}$ is decreasing on $[0,+\infty)$,
we obtain $\arctan u\geq \frac{u}{1+u^2}$ for all $u \ge 0$, and hence
\begin{eqnarray*}
\norm{f}_{L^2(\Omega)} &\leq &5^{\frac{s}{2}} C_d M (-\ln \epsilon)^{\frac{d}{1+\theta}} e^{-\frac{2}{\pi} \frac{(-\ln \epsilon)^{\frac{\theta}{1+\theta}}}{1+(-\ln \epsilon)^{-\frac{2}{1+\theta}}}} + C (-\ln \epsilon)^{-\frac{k}{1+\theta}} \norm{f}_{H^k(\Omega)}\\
& \le & (-\ln \epsilon)^{-\frac{k}{1+\theta}} \left( 5^{\frac{s}{2}} C_d M (-\ln \epsilon)^{\frac{d+k}{1+\theta}} e^{-\frac{2}{\pi} \frac{(-\ln \epsilon)^{\frac{2+\theta}{1+\theta}}}{1+(-\ln \epsilon)^{\frac{2}{1+\theta}}}} + C  \norm{f}_{H^k(\Omega)} \right).
\end{eqnarray*}
This entails that
\begin{equation}
\label{a9}
\norm{f}_{L^2(\Omega)} \le (-\ln \epsilon)^{-\frac{k}{1+\theta}} \left( 5^{\frac{s}{2}} C_d M (-\ln \epsilon)^{\frac{d+k}{1+\theta}} e^{-\frac{1}{\pi} (-\ln \epsilon)^{\frac{2+\theta}{1+\theta}}}+ C  \norm{f}_{H^k(\Omega)} \right),\ \epsilon \in (0,e^{-1}).
\end{equation}
Now, since $\lim_{\epsilon \downarrow 0} (-\ln \epsilon)^{\frac{d+k}{1+\theta}} e^{-\frac{1}{\pi} (-\ln \epsilon)^{\frac{2+\theta}{1+\theta}}}=0$, there exists $\epsilon_0>0$, depending only on $\Omega$, $d$, $k$, $s$, $\theta$ and $\frac{\norm{f}_{L^1(\Omega)}}{\norm{f}_{H^k(\Omega)}}$, such that we have
\begin{equation}
\label{a10}
\norm{f}_{L^2(\Omega)} \le C   \norm{f}_{H^k(\Omega)}(-\ln \epsilon)^{-\frac{k}{1+\theta}},\ \epsilon \in (0,\epsilon_0),
\end{equation}
which immediately yields \eqref{main1}.\\

\noindent {\it Second case: $d=s$}. Choosing $\delta=e^{-R^d}$ in \eqref{a7.1}, we get
\eqref{a8} where the constant $C_d$ is substituted for $2 C_d$. This leads to \eqref{main1} upon arguing as in the {\it First case}.\\

\noindent {\it Third case: $s>d$}. Taking $\delta=R^{-\frac{d}{s-d}}$ in \eqref{a7.1}, we obtain that
\begin{equation}
\label{a11}
\norm{f}_{L^2(\Omega)} \leq 5^{\frac{s}{2}} C_d M R^d \epsilon^{\frac{2}{\pi}\arctan R^{-1}}+
C R^{-k} \left( 1+ R^{\frac{k}{s-d}\left(s-d \left( 1+ \frac{d}{2k} \right)\right)} \right) \norm{f}_{H^k(\Omega)},
\end{equation}
where $2C_d$ is replaced by $C_d$.
\begin{enumerate}[i)]
\item If $s \leq  d+ \frac{d^2}{2k}$ then it is apparent that \eqref{a11} yields \eqref{a8}, from where we get \eqref{main1} by following the exact same path as in the {\it First case}.
\item If $s >  d+ \frac{d^2}{2k}$ then \eqref{a11} may be equivalently rewritten as
$$ 
\norm{f}_{L^2(\Omega)} \leq 5^{\frac{s}{2}} C_d M R^d \epsilon^{\frac{2}{\pi}\arctan R^{-1}}+ C R^{-\frac{d^2}{2(s-d)}}  \norm{f}_{H^k(\Omega)},
$$
which is the same estimate as \eqref{a8} where the power $k$ of the second occurence of $R$ is replaced by $\frac{d^2}{2(s-d)}$. 
Thus, by arguing in the same way as in the derivation of \eqref{a10} from \eqref{a8}, we obtain that
$$
\norm{f}_{L^2(\Omega)} \le C   \norm{f}_{H^k(\Omega)}(-\ln \epsilon)^{-\frac{d^2}{(1+\theta)(s-d)}},\ \epsilon \in (0,\epsilon_0),
$$
for some positive constant $\epsilon_0$ depending only on $\Omega$, $d$, $k$, $s$, $\theta$ and $\frac{\norm{f}_{L^1(\Omega)}}{\norm{f}_{H^k(\Omega)}}$, giving \eqref{main1}.
\end{enumerate}
\subsection{Proof of Theorem \ref{thm-inv2}}
We keep the notations of Section \ref{sec-pr_inv1}.
Assuming that $f =0$, we infer from \eqref{laplcasource} that
\bean
\label{eq001}
\abs{\hat{u_0}(i\omega \xi)} \leq (2\pi)^{-\frac{d}{2}}  \abs{\partial \Omega}^{\frac{1}{2}} 
e^{\kappa_\Omega \omega } \omega^{\frac{2-2\alpha}{\alpha}} \norm{\partial_\nu U(\omega, \cdot)}_{L^2(\partial \Omega)},\ \omega \in \R_+.
\eean  
Plugging \eqref{eq1} into \eqref{eq001} then yields
$$
\abs{\hat{u_0}(i\omega \xi)} \leq  (2\pi)^{-\frac{d}{2}}  \abs{\partial \Omega}^{\frac{1}{2}} \omega^{-1} e^{\kappa_\Omega \omega } 
\norm{\partial_\nu u}_{L^{\frac{2}{\alpha}}(\R_+;L^2(\partial \Omega))},\ \omega \in \R_+,
$$
and since $\omega^{-1} \le \left( \frac{2+\omega}{\omega} \right)^s$ for all $\omega \in \R_+$, where we recall that $s:= 1+ \lfloor \frac{2}{\alpha}\rfloor \in [3,+\infty)$, we end up getting that
\begin{equation}
\label{a12}
\left( \frac{\omega}{2+\omega} \right)^s e^{-\kappa_\Omega \omega} \abs{\hat{u_0}(i\omega \xi)} \leq  (2\pi)^{-\frac{d}{2}}  \abs{\partial \Omega}^{\frac{1}{2}}  
\norm{\partial_\nu u}_{L^{\frac{2}{\alpha}}(\R_+;L^2(\partial \Omega))},\ \omega \in [0,+\infty).
\end{equation}
The above estimate being similar to \eqref{a1} where $f$ replaced by $u_0$ and $c_0$ is equal to one, the desired result then follows from \eqref{a12} by arguing in the same way as in the derivation of Theorem \ref{thm-inv1} from \eqref{a1}.

\appendix
\section{Proof of Proposition \ref{pr-fwd}}
\label{sec-prfwd}
This section is devoted to the proof of Proposition \ref{pr-fwd}.
For this purpose we introduce the self-adjoint operator $A$ in $L^2(\Omega)$, acting as $-\Delta$ on his domain 
$D(A):=H^2(\Omega)\cap H^1_0(\Omega)$, that is to say the operator generated in $L^2(\Omega)$ by the closed quadratic form 
$a(u):=\int_\Omega \abs{\nabla u}^2 dx$, $u \in D(a):=H_0^1(\Omega)$. Since $H_0^1(\Omega)$ is compactly embedded in 
$L^2(\Omega)$, the operator $A$ has a compact resolvent and consequently a discrete spectrum. We denote by $(\lambda_k)_{k\in\N}$ the non-decreasing sequence of the eigenvalues of $A$, and by $\{ \phi_{k},\ k \in \N \}$ a set of eigenfunctions such that $A \phi_k = \lambda_k \phi_k$, which form an orthonormal basis in $L^2(\Omega)$.

With reference to \cite{KY1,SY}, the $L^2_{\mathrm{loc}}(\R_+; H^2(\Omega)\cap H^1_0(\Omega))$-solution $u$ to \eqref{directproblem} reads
\begin{equation}
\label{app-a1}
u(t)=S_0(t)u_0+\int_0^tS_1(t-s)fg(s)ds,\ t\in\R_+,
\end{equation}
where
\begin{equation}
\label{app-a2}
S_0(t)h:=\sum_{k=1}^{+\infty} E_{\alpha,1}(-\lambda_k t^\alpha)\langle h,\phi_k\rangle_{L^2(\Omega)},\ t\in\R_+,\ h \in L^2(\Omega),
\end{equation}
\begin{equation}
\label{app-a3}
S_1(t)h:=\sum_{k=1}^{+\infty} t^{\alpha-1}E_{\alpha,\alpha}(-\lambda_k t^\alpha) \langle h,\phi_k \rangle_{L^2(\Omega)},\ t\in\R_+,\ h \in L^2(\Omega),
\end{equation}
and $E_{\beta_1,\beta_2}$, for $(\beta_1,\beta_2)\in \R_+^2$, is the
the Mittag-Leffler function defined by
\begin{equation}
\label{app-a4}
E_{\beta_1,\beta_2}(z):=\sum_{k=0}^{+\infty} \frac{z^k}{\Gamma(\beta_1 k+\beta_2)},\ z\in\C.
\end{equation}
The rest of the proof is to show that the function $u$ defined by \eqref{app-a1}-\eqref{app-a4}, lies in $L^r(\R_+; H^{\frac{7}{4}}(\Omega))$ for all $r>\alpha^{-1}$, 
and that $u$ satisfies the energy estimate \eqref{reg.direct}. As will appear further, this essentially boils down to the following properties of 
the Mittag-Leffler functions defined by \eqref{app-a4}, which can be found in, e.g., \cite[Section 1.2.7 (pp. 34--35)]{P}.
\begin{lemma}
\label{lem:mlf}
Let $\beta_1 \in(0,2)$ and all $\beta_2 \in \R$. Then, there exists a constant $c= c(\beta_1, \beta_2)>0$ such that
\begin{equation}  
\label{app-a5}
\abs{E_{\beta_1,\beta_2}(\tau)} \leq \frac{c}{1+\abs{\tau}},\ \tau\in(-\infty,0],
\end{equation}
and it holds true for all $N \in \N$ that
\begin{equation}
\label{app-a6}
E_{\beta_1,\beta_2}(\tau)  = -\sum_{k=1}^N \frac{\tau^{-k}}{\Gamma(\beta_2-\beta_1 k)} + \underset{\tau\to-\infty}{\mathcal{O}}(\abs{\tau}^{-N-1}).
\end{equation}
\end{lemma}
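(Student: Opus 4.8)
The plan is to establish the asymptotic expansion \eqref{app-a6} first and then to deduce the pointwise bound \eqref{app-a5} from it. The natural tool is the Hankel-type contour integral representation of the Mittag-Leffler function. Starting from Hankel's formula for the reciprocal Gamma function, $\Gamma(s)^{-1}=\frac{1}{2\pi i}\int_{\gamma}e^{u}u^{-s}\,du$, where $\gamma$ runs from $-\infty$ below the negative real axis, encircles the origin counterclockwise, and returns to $-\infty$ above it, I would substitute this into the defining series \eqref{app-a4}, choose the circular part of $\gamma$ to have radius larger than $|z|^{1/\beta_1}$ so that $|zu^{-\beta_1}|<1$ on $\gamma$, and interchange the (now uniformly convergent) sum with the integral. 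Summing the resulting geometric series $\sum_{k\ge0}(zu^{-\beta_1})^{k}$ yields
\[
E_{\beta_1,\beta_2}(z)=\frac{1}{2\pi i}\int_{\gamma}\frac{e^{u}\,u^{\beta_1-\beta_2}}{u^{\beta_1}-z}\,du,
\]
valid for $z$ in the cut plane $\C\setminus[0,+\infty)$, in particular for $z=\tau<0$.

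To obtain \eqref{app-a6}, I would expand the kernel as a finite geometric series plus a remainder, $\frac{u^{\beta_1-\beta_2}}{u^{\beta_1}-z}=-\sum_{j=1}^{N}\frac{u^{\beta_1 j-\beta_2}}{z^{j}}+\frac{u^{\beta_1(N+1)-\beta_2}}{z^{N}(u^{\beta_1}-z)}$, and integrate term by term. Each principal term reproduces, via Hankel's formula applied in reverse, the coefficient $\frac{1}{2\pi i}\int_{\gamma}e^{u}u^{\beta_1 j-\beta_2}\,du=\Gamma(\beta_2-\beta_1 j)^{-1}$, which is exactly $-\tau^{-j}/\Gamma(\beta_2-\beta_1 j)$ after multiplication by $-z^{-j}=-\tau^{-j}$. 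It then remains to show that the remainder integral is $\mathcal O(|\tau|^{-N-1})$. This is where the hypothesis $\beta_1<2$ is decisive: deforming $\gamma$ onto rays making an angle $\mu$ with the positive real axis, with $\mu\in(\tfrac{\pi\beta_1}{2},\pi)$ — a nonempty range precisely because $\beta_1<2$ — one checks that along this contour $|u^{\beta_1}-z|\gtrsim|z|$ in the region carrying the mass of the integrand, so the remainder is bounded by $C|z|^{-N}\cdot|z|^{-1}$. Moreover, since $\arg\tau=\pi$ forces $\re(\tau^{1/\beta_1})=|\tau|^{1/\beta_1}\cos(\pi/\beta_1)<0$ (as $\pi/\beta_1>\pi/2$), the would-be exponential contribution $\frac{1}{\beta_1}\tau^{(1-\beta_2)/\beta_1}e^{\tau^{1/\beta_1}}$ decays exponentially and is absorbed into the $\mathcal O(|\tau|^{-N-1})$ error; this is exactly the reason why only the algebraic part survives on the negative real axis.

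Finally, \eqref{app-a5} follows by combining \eqref{app-a6} at $N=1$ with the analyticity of $E_{\beta_1,\beta_2}$. For $|\tau|$ large the expansion gives $|E_{\beta_1,\beta_2}(\tau)|\le C|\tau|^{-1}$ (the constant $\Gamma(\beta_2-\beta_1)^{-1}$ being finite, and the decay being merely faster in the degenerate case where $\beta_2-\beta_1$ is a non-positive integer and this coefficient vanishes), while on any bounded subinterval of $(-\infty,0]$ the entire function $E_{\beta_1,\beta_2}$ is continuous, hence bounded; patching the two regimes yields a single constant $c$ with $|E_{\beta_1,\beta_2}(\tau)|\le c/(1+|\tau|)$. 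I expect the genuinely delicate step to be the uniform estimate of the remainder integral: one must track the location of the poles $u^{\beta_1}=z$ relative to the deformed contour as $\tau\to-\infty$ and verify, with the contour angle $\mu$ chosen in $(\tfrac{\pi\beta_1}{2},\pi)$, that no residue is collected and that the exponentially large saddle contribution is genuinely absent on the ray $\arg z=\pi$ — both features hinging on the restriction $\beta_1\in(0,2)$. (The whole argument is classical and can be read off from \cite{P}.)
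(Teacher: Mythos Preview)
Your sketch is correct and follows the classical contour-integral derivation. Note, however, that the paper does not actually prove this lemma: it simply records the two statements and refers to \cite[Section 1.2.7]{P} for the proof. What you have written is essentially the argument one finds there (Hankel's representation of $1/\Gamma$, summation into the integral kernel $e^{u}u^{\beta_1-\beta_2}/(u^{\beta_1}-z)$, finite geometric expansion, and the contour at angle $\mu\in(\pi\beta_1/2,\pi)$ which is available precisely because $\beta_1<2$), so your approach and the cited source coincide. Your final step---combining the $N=1$ expansion with boundedness on compacts to get \eqref{app-a5}---is also the standard way to conclude.
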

Here and in the remaining part of this appendix, we follow the convention used in \cite{P} by setting
\begin{equation}
\label{ga}
\frac{1}{\Gamma(m)}:=0,\ m \in \Z\setminus\N := \{ \ldots,-2,-1,0 \}.
\end{equation}

We recall that for all $\gamma>0$, 
$D(A^\gamma) =\{ v \in L^2(\Omega),\ \sum_{k=1}^{+\infty} \lambda_k^{2 \gamma} \abs{\langle v , \varphi_k \rangle_{L^2(\Omega)}}^2 < \infty \}$ is a Hilbert space with the norm
$$ \norm{v}_{D(A^\gamma)}:= \left( \sum_{k=1}^{+\infty} \lambda_k^{2 \gamma} \abs{\langle v , \varphi_k \rangle_{L^2(\Omega)}}^2\right)^{\frac{1}{2}},\ v \in D(A^\gamma), $$
and that $D(A^\gamma)\subset H^{2\gamma}(\Omega)$, the injection being continuous. 
In light of this we write
$\norm{S_0(t)u_0}_{H^2(\Omega)} \leq C \norm{S_0(t)u_0}_{D(A)}$, where, from now on, $C$ denotes a generic positive constant depending only on $\Omega$ and $\alpha$, which may change from line to line,
and then deduce from \eqref{app-a5} that
\begin{eqnarray*}
\norm{S_0(t)u_0}_{H^2(\Omega)}^2 & \leq & C \sum_{k=1}^{+\infty} \lambda_k^2 \abs{E_{\alpha,1}(-\lambda_k t^\alpha)}^2 \abs{\langle u_0,\phi_k\rangle_{L^2(\Omega)}}^2\\
&\leq & C\sum_{k=1}^{+\infty} \frac{\lambda_k^2}{(1+\lambda_kt^\alpha)^2} \abs{\langle u_0,\phi_k \rangle_{L^2(\Omega)}}^2\\
&\leq & \frac{C}{(1+\lambda_1t^\alpha)^2}\sum_{k=1}^{+\infty} \lambda_k^2 \abs{\langle u_0,\phi_k \rangle_{L^2(\Omega)}}^2.
\end{eqnarray*}
Thus, we have $\norm{S_0(t)u_0}_{H^2(\Omega)} \leq \frac{C}{1+\lambda_1t^\alpha} \norm{u_0}_{D(A)}$, and consequently
$$
\norm{S_0(t)u_0}_{H^2(\Omega)}\leq \frac{C\norm{u_0}_{D(A)}}{1+t^\alpha},\ t \in \R_+.
$$
For $r >\alpha^{-1}$, this entails that $t \mapsto S_0(t)u_0 \in L^r \left( \R_+;H^{\frac{7}{4}}(\Omega) \right)$, with
\begin{equation}
\label{p1a}
\norm{S_0(\cdot)u_0}_{L^r \left( \R_+;H^{\frac{7}{4}}(\Omega) \right)} \leq C \norm{u_0}_{H^2(\Omega)}.
\end{equation}
Similarly, since $\norm{S_1(t)f}_{H^{\frac{7}{4}}(\Omega)} \leq C \norm{S_1(t)f}_{D(A^{\frac{7}{8}})}$ for all $t\in\R_+$, we have
\begin{eqnarray*}
\norm{S_1(t)f}_{H^{\frac{7}{4}}(\Omega)}^2 
&\leq & C t^{2(\alpha-1)} \sum_{k=1}^{+\infty} \lambda_k^{\frac{7}{4}} \abs{E_{\alpha,\alpha}(-\lambda_k t^\alpha)}^2
\abs{\langle f,\phi_k\rangle_{L^2(\Omega)}}^2\\
&\leq & Ct^{2(\alpha-1)}\sum_{k=1}^{+\infty} \frac{\lambda_k^{\frac{7}{4}}}{(1+\lambda_kt^\alpha)^2}\abs{\langle f,\phi_k\rangle_{L^2(\Omega)}}^2.
\end{eqnarray*}
Taking into account that
$
\frac{\lambda_k^{\frac{7}{4}}}{(1+\lambda_kt^\alpha)^2} = t^{-\frac{7\alpha}{4}} \left( \frac{\lambda_k t^{\alpha}}{1+ \lambda_k t^\alpha} \right)^{\frac{7}{4}} \frac{1}{(1+\lambda_kt^\alpha)^{\frac{1}{4}}} \leq t^{-\frac{7\alpha}{4}}$ for all $k \in \N$, we get from the above estimate that
$\norm{S_1(t)f}_{H^{\frac{7}{4}}(\Omega)}^2 \leq Ct^{2(\alpha-1)}t^{-{\frac{7}{4}}\alpha}\sum_{k=1}^{+\infty} \abs{\langle f,\phi_k \rangle_{L^2(\Omega)}}^2$, which entails that
\begin{equation}
\label{p1b}
\norm{S_1(t)f}_{H^{\frac{7}{4}}(\Omega)} \leq
C t^{2 \left( \frac{\alpha}{8}-1\right)} \norm{f}_{L^2(\Omega)},\ t \in \R_+.
\end{equation}
Further, applying \eqref{app-a6} with $\beta_1=\beta_2=\alpha$ and $N=2$, we have 
$E_{\alpha,\alpha}(\tau) = \frac{\tau^{-2}}{\Gamma(-\alpha)} + \underset{\tau\to-\infty}{\mathcal{O}}(\abs{\tau}^{-3})$,
by virtue of \eqref{ga}. Thus, there exists a positive constant $C$, depending only on $\alpha$, such that the following estimate
$$\abs{t^{\alpha-1} E_{\alpha,\alpha}(-\lambda_k t^\alpha)} \leq C t^{\alpha-1}(\lambda_kt^\alpha)^{-2}
\leq C\lambda_k^{-2}t^{-(\alpha+1)},\ t \in [1,+\infty),$$
holds uniformly in $k \in \N$. As a consequence we have
\begin{eqnarray*}
\norm{S_1(t)f}_{H^2(\Omega)}^2&\leq & C \norm{S_1(t)f}_{D(A)}^2 \\
& \leq & C\sum_{k=1}^{+\infty} \lambda_k^2 \abs{t^{\alpha-1}E_{\alpha,\alpha}(-\lambda_k t^\alpha)}^2 \abs{\langle f,\phi_k\rangle_{L^2(\Omega)}}^2\\
&\leq & C \sum_{k=1}^{+\infty} \lambda_k^{-2} t^{-2(\alpha+1)} \abs{\langle f,\phi_k\rangle_{L^2(\Omega)}}^2,
\end{eqnarray*}
and hence $\norm{S_1(t)f}_{H^2(\Omega)} \leq C t^{-(\alpha+1)} \norm{f}_{L^2(\Omega)}$ for all $t \in [1,+\infty)$, where we used that $\lambda_k^{-2} \leq \lambda_1^{-2}$ for all $k \in \N$. From this and \eqref{p1b} it then follows that
\begin{equation}
\label{p1c}
\norm{S_1(t)f}_{H^{\frac{7}{4}}(\Omega)} \leq C \left(t^{\frac{\alpha}{8}-1}\mathds{1}_{(0,1)}(t)+t^{-(\alpha+1)}\mathds{1}_{[1,+\infty)}(t)\right),\ t\in \R_+,
\end{equation}
where the notation $\mathds{1}_{I}$ stands for the characteristic function of any subinterval $I \subset \R$. 
Putting \eqref{p1b} and \eqref{p1c} together, we obtain that $S_1(t) f \in L^1 \left(\R_+;H^{\frac{7}{4}}(\Omega) \right)$, with
\begin{equation}
\label{p1d}
\norm{S_1(\cdot)f}_{L^1 \left(\R_+;H^{\frac{7}{4}}(\Omega) \right)} \leq C \norm{f}_{L^2(\Omega)}.
\end{equation}
Moreover, we have
\begin{eqnarray*}
\norm{u(t)}_{H^{\frac{7}{4}}(\Omega)} & \leq\ & \norm{S_0(t)u_0}_{H^{\frac{7}{4}}(\Omega)}+\int_0^t \norm{S_1(t-s)f}_{H^{\frac{7}{4}}(\Omega)} \abs{g(s)}ds\\
&\leq& \norm{S_0(t)u_0}_{H^{\frac{7}{4}}(\Omega)} +\left( \norm{S_1(\cdot)f}_{H^{\frac{7}{4}}(\Omega)} \mathds{1}_{\R_+}\right) \star \left(\abs{g} \mathds{1}_{\R_+}\right)(t),\ t \in \R_+,
\end{eqnarray*}
from \eqref{app-a1}, and hence
\begin{eqnarray*}
\norm{u}_{L^r(\R_+;H^{\frac{7}{4}}(\Omega))} & \leq & \norm{S_0(\cdot)u_0}_{L^r(\R_+;H^{\frac{7}{4}}(\Omega))}+\norm{S_1(\cdot)f}_{L^1(\R_+;H^{\frac{7}{4}}(\Omega))} \norm{g}_{L^r(\R_+)}\\
&\leq & C \left(\norm{u_0}_{H^2(\Omega)}+\norm{g}_{L^r(\R_+)} \norm{f}_{L^2(\Omega)}\right),
\end{eqnarray*}
upon combining Young's convolution inequality with \eqref{p1a} and \eqref{p1d}.
This completes the proof of Proposition \ref{pr-fwd}.

\section{Unique continuation}
\label{sec-uc}

Let $s \in \N$, let $\xi \in \mathbb S^{d-1}$ and let $\phi \in L^1(\Omega)$. Put $\cH:=\{ z \in \C,\ \re z > -1 \}$. With reference to \eqref{def-Fourier} we introduce
\begin{equation}
\label{FFF}
F(z):=  \left( \frac{z}{2+z} \right)^s e^{-\kappa_\Omega z} \hat{\phi}(iz\xi),\ z \in \overline{\cH} = \{ z \in \C,\ \re z \ge -1 \}, 
\end{equation}
where we recall that $\hat{\phi}$ is the Fourier transform of $\phi$ and $\kappa_\Omega=\sup_{x \in \Omega} \abs{x}$ is the Euclidean diameter of $\Omega$. 

Evidently, $F$ is holomorphic in the half-plane $\cH$ and continuous on $\partial \cH = \{ z \in \C,\ \re z=-1\}$. Moreover, we have 
\begin{equation}
\label{app-b1}
\abs{F(z)} \leq (2 \pi)^{-d \slash 2} e^{2 \kappa_\Omega} \norm{\phi}_{L^1(\Omega)},\ z \in \overline{\cH},
\end{equation}
directly from \eqref{def-Fourier}. With reference to \eqref{app-b1} we set for further use
\begin{equation}
\label{app-b2}
M := 1+ (2 \pi)^{-d \slash 2} e^{2 \kappa_\Omega} \norm{\phi}_{L^1(\Omega)} \in [1,+\infty).
\end{equation}
As we aim to compute a suitable upper bound of $F$ in the quadrant $Q := \{ z \in \C,\ \re z >-1\ \mbox{and}\ \im z <0 \}$, we introduce
\begin{equation}
\label{def-w}
w(z):=\frac{2}{\pi} \left( \frac{\pi}{2}+ \arg(z+1) \right),\ z \in \overline{Q} \setminus \{-1\} = \{ z \in \C,\ \re z \ge -1\ \mbox{and}\ \im z \le 0 \} \setminus \{ -1 \}
\end{equation}
and we recall the following result, which is borrowed from \cite{AGT}.
\begin{proposition} 
\label{pr-B1}
The function $w$ defined in \eqref{def-w} is a harmonic measure and is the unique solution to the system
\begin{equation}
\label{app-b3}
\left\{ \begin{array}{ll}
\Delta w(z) = 0, & z \in Q, \\
w(t) =1, & t \in (-1,+\infty), \\
w(-1-it)=0, & t \in (0,+\infty).
\end{array} \right. 
\end{equation}
\end{proposition}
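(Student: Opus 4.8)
The plan is to check directly that $w$ is harmonic, bounded, and attains the prescribed boundary values, and then to prove uniqueness among bounded solutions; together these identify $w$ with the harmonic measure of the half-line $(-1,+\infty)$ relative to $Q$. First I would move the corner to the origin by setting $\zeta=z+1$, so that $Q$ becomes the open fourth quadrant and $w=\frac{2}{\pi}\bigl(\frac{\pi}{2}+\arg\zeta\bigr)$ with $\arg$ the principal argument. Since $\re\zeta>0$ on $Q$, the variable $\zeta$ lies in the right half-plane, away from the branch cut of the principal logarithm, so $\arg\zeta=\im\log\zeta$ is harmonic and hence $w$ is harmonic in $Q$. The boundary values are then read off immediately: on $\{\im z=0,\ \re z>-1\}$ one has $\zeta>0$, whence $\arg\zeta=0$ and $w=1$, while on $\{\re z=-1,\ \im z<0\}$ one has $\zeta=-it$ with $t>0$, whence $\arg\zeta=-\frac{\pi}{2}$ and $w=0$. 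Finally $\arg\zeta\in[-\frac{\pi}{2},0]$ on $\overline{Q}\setminus\{-1\}$, so $0\le w\le 1$ and $w$ is bounded.

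For the uniqueness statement I would use the conformal map $\eta=\zeta^2=(z+1)^2$, which carries $Q$ onto the lower half-plane $\{\im\eta<0\}$, sends the two bounding rays onto the positive and negative real axes respectively, and collapses the corner $z=-1$ to the single point $\eta=0$. Given a second bounded solution $w'$ of the system, the difference $v:=w-w'$ transports to a bounded harmonic function on the lower half-plane that is continuous up to $\R\setminus\{0\}$ and vanishes there. A bounded harmonic function on a half-plane coincides with the Poisson integral of its (a.e.\ defined, bounded) boundary values, and a single boundary point has zero harmonic measure; hence $v\equiv 0$, and pulling back through the conformal map gives $w'=w$.

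The delicate point is precisely this uniqueness step, since $Q$ is unbounded and the elementary maximum principle does not apply. The role of the map $\zeta\mapsto\zeta^2$ is to turn the problem into a Dirichlet problem with bounded data on a half-plane, where the Poisson representation of bounded harmonic functions settles the matter; equivalently, one may invoke the Phragm\'en--Lindel\"of principle on the quadrant $Q$, whose opening angle $\frac{\pi}{2}$ is smaller than $\pi$, to conclude that a bounded harmonic function vanishing on $\partial Q\setminus\{-1\}$ must vanish identically. In either route one must also note that the isolated boundary singularity produced at the corner is removable, which holds because a single point carries no harmonic measure.
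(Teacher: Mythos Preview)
Your verification that $w$ is harmonic in $Q$ and attains the stated boundary values is essentially identical to the paper's: both write $w=\frac{2}{\pi}\bigl(\frac{\pi}{2}+\im\log(z+1)\bigr)$ and read off harmonicity from holomorphy, then compute $\arg(t+1)=0$ and $\arg(-it)=-\frac{\pi}{2}$ on the two boundary rays. Your translation $\zeta=z+1$ is cosmetic.

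Where you differ is that you actually argue the uniqueness clause, whereas the paper's proof verifies the three conditions and then stops, tacitly appealing to the standard theory of harmonic measure (the result is cited from \cite{AGT}). Your route via the conformal map $\eta=(z+1)^2$ to the lower half-plane, followed by the Poisson representation of bounded harmonic functions (or, equivalently, Phragm\'en--Lindel\"of on the quadrant), is correct and is precisely what underlies the well-definedness of harmonic measure in this setting. You are also right to single out boundedness as the class in which uniqueness holds: without it the system \eqref{app-b3} has many solutions (e.g.\ add any real multiple of $\im(z+1)^2$). So your argument is more self-contained than the paper's, at the cost of a few extra lines; the paper simply invokes the notion of harmonic measure and leaves these details to the reference.
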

\begin{proof} To make the present paper self-contained and for the convenience of the reader we include the proof of this result. 
First, we notice that
$w(t)=\frac{2}{\pi} \left( \frac{\pi}{2}+ \arg(t+1) \right)=\frac{2}{\pi} \left( \frac{\pi}{2} + 0 \right)=1$ for all $t\in(-1,+\infty)$ and that
$w(-1-it)=\frac{2}{\pi} \left( \frac{\pi}{2}+ \arg(-it) \right)=\frac{2}{\pi} \left( \frac{\pi}{2} - \frac{\pi}{2} \right)=0$ for $t\in(0,+\infty)$.
Next, in order to show that $\Delta w=0$ in $Q$, it is enough to we rewrite $w$ as
$$w(z)=\frac{2}{\pi} \left(\frac{\pi}{2}+ \im \log(z+1) \right),\ z\in Q,$$
where $\log$ denotes the complex logarithmic function $\log$, defined in $\{ z \in\C,\ -i z \notin[0,+\infty) \}$. The desired result then follows from the holomorphicity of $z \mapsto \log(z+1)$ in the quadrant $Q$. 
\end{proof}

\begin{theorem} 
\label{thmUC}
Let $F$ be defined by \eqref{FFF}, let $M$ be given by \eqref{app-b2} and let $w$ be the same as in \eqref{def-w}. Then, we have
\begin{equation}
\label{ttt1a}
\abs{F(z)} \leq  M m^{w(z)},\ z\in \overline{Q} \setminus \{-1\},
\end{equation}
where  $m:=\sup_{t\in[-1,+\infty)} \abs{F(t)}$.
\end{theorem}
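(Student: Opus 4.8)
The plan is to treat this as a Phragm\'en--Lindel\"of / two-constants argument on the quadrant $Q$, comparing the subharmonic function $\log\abs{F}$ against the harmonic comparison function furnished by Proposition \ref{pr-B1}. Since $F$ is holomorphic in $Q$ and continuous on $\overline{Q}\setminus\{-1\}$, the function $\log\abs{F}$ is subharmonic in $Q$; on the other hand, $\log\para{M m^{w(z)}}=\log M+w(z)\log m$ is harmonic in $Q$ because $w$ is harmonic there by \eqref{app-b3}. I would therefore introduce $v:=\log\abs{F}-\log M-w\log m$, which is subharmonic in $Q$, and aim to prove $v\le 0$ throughout $Q$; exponentiating then gives \eqref{ttt1a} in $Q$, and the continuity of $F$ and $w$ extends it to $\overline{Q}\setminus\{-1\}$.

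Next I would check that $v\le0$ on the two pieces of $\partial Q$. On the real half-line $\{z:\ \im z=0,\ \re z\ge-1\}$ one has $w\equiv1$, and $\abs{F(t)}\le m$ by the very definition of $m$, so $v\le\log m-\log M-\log m=-\log M\le0$ because $M\ge1$. On the vertical half-line $\{z:\ \re z=-1,\ \im z\le0\}$ one has $w\equiv0$, while \eqref{app-b1} gives $\abs{F}\le M-1$, whence $v\le\log(M-1)-\log M<0$. Thus $v\le0$ on all of $\partial Q\setminus\{-1\}$.

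The main obstacle is that $Q$ is unbounded, so the maximum principle does not apply verbatim; this is where a Phragm\'en--Lindel\"of device is needed. Here the global bound \eqref{app-b1} together with $w\in[0,1]$ shows that $v$ is bounded from above on $Q$, which is exactly the situation the method is designed for. I would fix $\varepsilon>0$ and set $v_\varepsilon(z):=v(z)-\varepsilon\,\re\para{(z+1)^{1/2}}$, using the principal branch; since $z+1$ has argument in $[-\frac{\pi}{2},0]$ on $\overline{Q}$, the function $\re\para{(z+1)^{1/2}}$ is harmonic in $Q$, nonnegative on $\overline{Q}$, and tends to $+\infty$ as $\abs{z}\to\infty$. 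Hence $v_\varepsilon$ is subharmonic, satisfies $v_\varepsilon\le v\le0$ on $\partial Q\setminus\{-1\}$, and tends to $-\infty$ at infinity; applying the maximum principle on $Q\cap\{\abs{z+1}<R\}$ and letting $R\to\infty$ gives $v_\varepsilon\le0$ on $Q$, after which $\varepsilon\downarrow0$ yields $v\le0$. The isolated corner at $z=-1$ causes no difficulty, since $v$ is bounded above near it.

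Finally I would dispose of the degenerate case $m=0$: then $F$ vanishes on the real half-line $(-1,+\infty)$, which has a limit point in the domain of holomorphy $\cH$, so $F\equiv0$ by the identity theorem and \eqref{ttt1a} holds trivially. For $m>0$ the argument above applies and completes the proof. I expect the only genuinely delicate point to be the justification of the maximum principle on the unbounded quadrant, which the auxiliary harmonic function $\re\para{(z+1)^{1/2}}$ handles cleanly thanks to the uniform bound \eqref{app-b1}.
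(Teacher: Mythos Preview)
Your argument is correct and follows the same route as the paper's: both verify the boundary inequalities on the two half-lines of $\partial Q$ and then invoke the harmonic-measure/two-constants principle to propagate them into $Q$. The paper simply cites the Two-constants theorem (Nevanlinna) as a black box and writes $\abs{F(z)}\le M^{1-w(z)}m^{w(z)}$ directly, whereas you unfold that citation into an explicit Phragm\'en--Lindel\"of step with the barrier $\varepsilon\,\re\para{(z+1)^{1/2}}$ and treat the degenerate case $m=0$ separately; these are differences of exposition rather than of substance.
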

\begin{proof}
By combining the estimate $\abs{F(t)} \leq m$ for all $t \in (-1,+\infty)$, with the basic identity $m=M^{1-1} m^1$ and the second line of \eqref{app-b3}, we obtain that 
\begin{equation}
\label{app-b4}
\abs{F(t)} \leq M^{1-w(t)} m^{w(t)},\ t \in (-1,+\infty).
\end{equation}
Further we have
\begin{equation}
\label{ttt1b}
\abs{F(z)} \leq M,\ z\in \overline{Q},
\end{equation}
from \eqref{app-b1}-\eqref{app-b2}, hence $\abs{F(-1-it)} \leq M$ for all $t \in (0,+\infty)$. From this, the identity $M=M^{1-0} m^0$ and the third line of \eqref{app-b3}, it then follows that
\begin{equation}
\label{app-b5}
\abs{F(-1-it)} \leq M^{1-w(-1-it)} m^{w(-1-it)},\ t \in (0,+\infty),
\end{equation}
Summing up \eqref{app-b4} and \eqref{app-b5}, we have
\begin{equation}
\label{ttt1c}
\abs{F(z)}\leq M^{1-w(z)}m^{w(z)},\ z \in \partial Q \setminus \{- 1 \}.
\end{equation}
Since $F$ is holomorphic in $Q$ and $w$ is a harmonic measure of $Q$ by Proposition \ref{pr-B1}, \eqref{ttt1b}, \eqref{ttt1c} and the Two-constants theorem (see, e.g., \cite[Chap. III, Section 2.1]{Ne} or \cite{TT}) yield
$$\abs{F(z)} \leq M^{1-w(z)} m^{w(z)},\ z\in Q.$$
Thus, by continuity of $F$ on $\partial Q$ and $w$ on $\partial Q \setminus \{ -1 \}$, we obtain that
$$\abs{F(z)} \leq M^{1-w(z)} m^{w(z)},\ z\in \overline{Q} \setminus \{ -1 \}.$$
Finally, \eqref{ttt1a} follows readily from this and the inequality $M \geq 1$ arising from \eqref{app-b2}.
\end{proof}



\end{document}